\numberwithin{equation}{section}
\newcommand{\N}{\mathds{N}}
\newcommand{\Q}{\mathds{Q}}
\newcommand{\R}{\mathds{R}}
\newcommand{\cB}{\mathscr{B}}
\newcommand{\cL}{\mathscr{L}}
\newcommand{\cM}{\mathscr{M}}
\newcommand{\cT}{\mathscr{T}}
\newcommand{\cP}{\mathscr{P}}
\newcommand{\applied}[2]{\langle #1,#2\rangle}
\DeclarePairedDelimiter\norm{\lVert}{\rVert}
\DeclarePairedDelimiter\abs{\lvert}{\rvert}
\newcommand{\argument}{\,\cdot\,}
\renewcommand{\phi}{\varphi}
\newcommand{\regOps}{\cL^r}
\newcommand{\dx}{\;\mathrm{d}}
\newcommand{\loc}{\mathrm{loc}}
\theoremstyle{definition}
\newtheorem{definition}{Definition}[section]
\theoremstyle{plain}
\newtheorem{proposition}[definition]{Proposition}
\newtheorem{lemma}[definition]{Lemma}
\newtheorem{theorem}[definition]{Theorem}
\newtheorem{corollary}[definition]{Corollary}
\newtheorem*{theorem_no_number}{Theorem}
\begin{document}

\title{On Characteristics of the Range of Kernel Operators}
\author{Moritz Gerlach}
\address[M. Gerlach]{Universit\"at Potsdam, Institut f\"ur Mathematik, Karl--Liebknecht--Stra{\ss}e 24–25, 14476 Potsdam, Germany}
\email{gerlach@math.uni-potsdam.de}
\author{Jochen Gl\"uck}
\address[J. Gl\"uck]{Bergische Universit\"at Wuppertal, Fakult\"at f\"ur Mathematik und Naturwissenschaften, Gaußstra{\ss}e 20, 42119 Wuppertal, Germany}
\email{glueck@uni-wuppertal.de}
\subjclass[2020]{Primary 47B34; Secondary: 47B65}
\keywords{Kernel operators; integral kernel; Banach lattice; range condition; lower-semicontinuous functions}
\date{\today}
\dedicatory{Dedicated to Justus}

\commby{}

\begin{abstract}
	We show that a positive operator between $L^p$-spaces is given by integration against a kernel function if and only if the image of each positive function
	has a  lower semi-continuous representative with respect to a suitable topology. 
	This is a consequence of a new characterization of kernel operators on general Banach lattices
	as those operators whose range can be represented over a fixed countable set of positive vectors.
	Similar results are shown to hold for operators that merely dominate a non-trivial kernel operator.
\end{abstract}

\maketitle

\section{Introduction} 
\label{sec:introduction}

The development of a representation theory for a certain operators on spaces of measurable functions in terms of integrals over measurable kernels
was essentially driven by Gelfand, Kantorovitch, Vulich, Dunford and Pettis \cite{dunford1940}.
Apparently, Nakano first recognized in \cite{nakano1953} the close relation between such operators and the band generated by finite rank operators.
This allows to transfer the concept of operators with integral kernels to the abstract concept of so-called \emph{kernel operators} on Banach lattices,
as it was done in the works of Lozanovski\'i \cite{lozanovskii1966} and Nagel and Schlotterbeck \cite{nagel1972}; see also \cite[IV \S 9]{schaefer1974}.
In \cite{bukhvalov1975} Bukhvalov stated that kernel operators can be described by a certain continuity condition, which has later
been called (sequential) star--order continuity.
This description was been proved independently by Bukhvalov \cite{bukhvalov1978} and Schep \cite{schep1979} 
(and is also contained in \cite[Ch~13, Sec~96]{zaanen1983}) 
and generalized to the
context of abstract vector lattices by Vietsch \cite{vietsch1979} and Grobler and van Eldik \cite{grobler1980}. 
As a consequence one obtains that every linear operator $T\colon L^p \to L^\infty$ (or, more generally, $T\colon L^p \to L^\infty_\loc$ \cite[Prop 1.7]{arendt1994}) 
on $\sigma$-finite measure spaces for some $1\leq p<\infty$ is a kernel operator \cite[Thm 98.2]{zaanen1983}.
In particular, every linear operator $T\colon L^p \to L^p$ that maps $L^p$-functions to continuous ones, is a kernel operator.

Nowadays, kernel operators appear naturally in a variety of ways. One example is the analysis of linear evolution equations: 
these equations often have smoothing properties which improve the regularity of solutions as time proceeds, 
and consequentially the operator semigroup that governs the equation consists of kernel operators. 

\subsection*{Contribution of this article}

As described above, it has been well known for many decades that a linear operator on an $L^p$-space that improves the 
regularity by mapping to bounded or continuous functions can be represented by an integral with a measurable kernel.
The contribution of this article is to complete the picture by providing a universal description of kernel operators by smoothing properties.
The main result for operators on $L^p$-spaces, proved in Theorem~\ref{thm:characterization-integral-operators}, is the following.

\begin{theorem_no_number}
	Let $(\Omega_1,\Sigma_1,\mu_1)$ and $(\Omega_2,\Sigma_2,\mu_2)$ be $\sigma$-finite measure spaces, $p\in [1,\infty)$ and $q\in [1,\infty]$.
	For each positive linear operator $T \colon L^p(\Omega_1,\Sigma_1,\mu_1) \to L^q(\Omega_2,\Sigma_2,\mu_2)$ the following are equivalent:
	\begin{enumerate}[\upshape (i)]
		\item The operator $T$ is a \emph{kernel operator}, 
		i.e.\ there exists a $\Sigma_1\otimes \Sigma_2$-measurable function $k\colon \Omega_1\times \Omega_2 \to \R_+$ such that
		for all $f\in L^p(\Omega_1,\Sigma_1,\mu_1)$ the following holds: 
		$k(\argument,y)f(\argument) \in L^1(\Omega_1,\Sigma_1,\mu_1)$ for $\mu_2$-almost all $y\in \Omega_2$ and 
		\begin{align*}
			Tf = \int_{\Omega_1} k(x, \argument) f(x) \dx \mu_1(x)
		\end{align*}
		$\mu_2$-almost everywhere.
	\item There exists a second countable topology $\cT$ on $\Omega_2$ with Borel $\sigma$-algebra $\cB(\cT) \subseteq \Sigma_2$
	such that $Tf$ has a lower semi-continuous representative with values in $[0,\infty]$ for every $f\in L^p(\Omega_1,\Sigma_1,\mu_1)_+$.
	\end{enumerate}
\end{theorem_no_number}

We emphasize that both implications in the theorem appear to be new. 
Note that the lower semi-continuous representatives of $Tf$ in (ii) need not be uniquely determined in general 
and that (ii) does not assert that they can be chosen in a linear way.
This distinguishes our implication ``(ii) $\Rightarrow$ (i)'' from a similarly looking but technically quite different implication in \cite[Cor~1.7]{jacob2006}:
as can be seen in the proof, \cite[Cor~1.7]{jacob2006} assumes that the lower semi-continuous representatives of $Tf$ do not only exist 
but can be chosen to depend linearly on $f$, which essentially means that the operator $T$ has a lifting that maps into the space of measurable functions.

It is worth noting that for every (arbitrarily non-regular) function $g\in L^p$ and each $h\in L^q$, where $p^{-1}+q^{-1}=1$, the rank-one operator $h\otimes g$, 
given by $f\mapsto \applied{h}{f}g$, is obviously a kernel operator on $L^p$. This is why, at a first sight, it seems hopeless to give a universal 
regularization property for general kernel operators: 
they are purely measure theoretical constructs and thus do not regard any given topology.
Instead, one has to choose a topology in dependence of the operator in order to obtain the equivalence.

The result can be derived from the following characterization of kernel operators on general Banach lattices, 
that we prove as Theorem~\ref{thm:kernel-characterization}. 
The general definition of \emph{kernel operators} on Banach lattices and its relation to integral kernels on $L^p$-spaces
is recalled in Section~\ref{sec:preliminaries}.

\begin{theorem_no_number}
	Let $E$ be a Banach lattice with order continuous norm such that $E_+$ contains a quasi-interior point. 
	Let $F$ be an order complete Banach lattice such that $F'$ contains an order continuous
	strictly positive functional. Then a positive linear operator $T\colon E\to F$ is a kernel operator if and only if
	there exists an at most countable set $H\subseteq F_+$ such that 
	\[ Tx = \sup \big([0,Tx]\cap \R_+H\big) \]
	for all $x\in E_+$.
\end{theorem_no_number}

Note that, by replacing $H$ with $\Q_+H$, the condition in the theorem can be replaced with the condition 
$Tx = \sup \big([0,Tx]\cap H\big)$. However, the formula in the theorem appears to be slightly more intuitive 
since Proposition~\ref{prop:quasi-units} below shows that one can, for instance, think of $H$ as a countable set of indicator functions.

Under different assumptions on the spaces, the implication ``$\Rightarrow$'' in the theorem 
can be derived from a characterization of kernel operators by means of equimeasurable sets, 
which is due to Schachermayer in the case of $L^p$-spaces \cite[Thm~4.4]{schachermayer1981}, 
due to Schep on more generally classes of function spaces \cite[Thm~3.3]{schep1981}
and due to Grobler and van Eldik in the setting of abstract vector lattices \cite[Thm~2.4]{grobler1985}. 

Loosely speaking, the theorem can be interpreted as follows: 
while on atomic Banach lattices (e.g.\ on $\ell^p$) every positive operator is a kernel operator, 
the theorem shows that in general positive kernel operators are described by the fact that the image of the positive cone can be constructed in a specific way from 
a fixed set of countably many positive vectors.
While kernel operators are known to factor through atomic spaces with regular factors, see e.g.\ the recent work \cite{blanco2020} and the references therein,
the theorem above gives a new and different perspective on similarities of general kernel operators and operators on atomic spaces.

In Section \ref{sec:partialkernels} we prove similar results for positive operators that merely dominate a non-zero kernel operator.
These \emph{partial kernel operators} play a useful role in the analysis of the asymptotic behavior of positive operator semigroups 
as shown for instance in \cite[Sec~4.2]{gerlach2019} and \cite{pichor2000}; see also \cite{gerlach2013b, gerlach2017}.  
For a certain class of operator semigroups on $L^1$-spaces the existence of a partial kernel operator within 
the semigroup even characterizes operator norm convergence of the semigroup as times tends to infinity, see \cite[Thm~1.1]{glueck2022}.

\section{Notation and terminology}
\label{sec:preliminaries}

Let $E,F$ be Banach spaces. We denote by $\cL(E,F)$ the space of bounded linear operators from $E$ to $F$.
The dual spaces of $E$ and $F$ are denoted by $E'$ and $F'$ and the adjoint of an operator $T \in \cL(E,F)$ by $T' \in \cL(F',E')$.

Throughout, we assume the reader to be familiar with the theory of Banach lattices.
Here, we only recall a few definitions and facts in order to fix the notation and the terminology. 
Beyond this, we refer to the classical monographs on Banach lattices such as, for instance, \cite{aliprantis19852006, meyer1991, schaefer1974}.

\subsection*{Banach lattices and positive operators} 

All Banach lattices in this article are real, i.e., the underlying scalar field is $\R$.
Let $E$ be a Banach lattice. A vector $x \in E$ is called \emph{positive} if $x \ge 0$ and the positive cone in $E$ is denoted by $E_+$.  
We write $x > 0$ if $x \ge 0$ but $x \not= 0$. For two vectors $x,z \in E$ we denote by
$[x,z] \coloneqq \{y \in E : x \le y \le z\}$ the \emph{order interval} between $x$ and $z$.
The dual space $E'$ of $E$ is a Banach lattice, too; 
a functional $\varphi\in E'$ satisfies $\varphi \in E'_+$ if and only if $\applied{\varphi}{x} \geq 0$ for all $x \in E_+$.
A functional $\varphi \in E'$ is called \emph{strictly positive} if $\applied{\varphi}{x} > 0$ for all $0 < x \in E$.

A vector subspace $J\subseteq E$ such that $\abs{x}\leq \abs{y}$ for $y\in J$ implies that $x\in J$ is
called an \emph{ideal}. Every ideal is a \emph{sublattice}, i.e., it is closed under the lattice operations.
For a given $q \in E_+$ the smallest ideal containing $q$ is called the \emph{principal ideal} generated by $q$ and is given by
$E_q = \{y \in E: \exists c \ge 0\text{, } \abs{y}\le cq\}$.
The so-called \emph{gauge norm} $\norm{\argument}_q$ on the principal ideal $E_q$ is defined as
\[ \norm{y}_q \coloneqq \min \{ c \geq 0 : \abs{y} \leq c q \}  \]
for each $y \in E_q$. It is a complete norm on $E_q$ that renders $E_q$ a Banach lattice; 
$(E_q, \norm{\argument}_q)$ is a so-called \emph{AM-space} with (order) unit $q$ \cite[Prop 1.2.13]{meyer1991}.
A Banach lattice $E$ whose norm satisfies $\norm{x+y}=\norm{x}+\norm{y}$ for all $x,y\in E_+$ is called an \emph{AL-space}.

If an ideal $J\subset E$ is closed under general suprema, i.e., $\sup A \in J$ for each set $A\subseteq J$ whose supremum exists in $E$,
then $J$ is called a \emph{band}. For each subset $A\subseteq E$ we denote by $A^\bot \coloneqq \{ y \in E : \abs{y}\wedge \abs{x} = 0 \text{ for all } x\in A\}$
the \emph{disjoint complement} of $A$ and we recall that $A^\bot$ is a band, and that $A^{\bot\bot} \coloneqq (A^\bot)^\bot$ is the smallest band containing $A$.
A vector $q \in E_+$ is called a \emph{weak unit} if the \emph{principal band} $\{q\}^{\bot\bot}$ is equal to the whole space $E$.
For a weak unit $q\in E_+$, we denote by 
\[
	Q(q) \coloneqq \{ v \in E_+ : v \wedge (q-v) = 0 \} = \{ v \in [0,q] : v \wedge (q-v) = 0 \}
\]
the set of all \emph{quasi-units} with respect to $q$.
If $E$ is an $L^p$-space over a $\sigma$-finite measure space for $p \in [1,\infty]$, then a vector $q \in E$ is a weak unit 
if and only if $q(\omega) > 0$ for almost all $\omega \in \Omega$; in this case, a vector $v \in L^p$ is a quasi-unit with respect to $q$ if and only if 
$v = q \mathds{1}_A$ for a measurable set $A$. 

Let $E$ and $F$ be Banach lattices. 
A linear operator $T\colon E \to F$ is called \emph{positive} if $TE_+ \subseteq F_+$. A positive linear operator mapping between two Banach lattices is automatically continuous \cite[Prop 1.3.5]{meyer1991}; by $\cL(E,F)_+$ we denote the collection of positive operators from $E$ to $F$.
We write $\regOps(E,F)$ for the regular operators from $E$ to $F$, i.e.\ the linear span of $\cL(E,F)_+$.

An operator $T\in \cL(E,F)$ satisfying $TE \subseteq F_q$ for a given $q\in F_+$ is also continuous from $E$ to $F_q$ 
when $F_q$ is endowed with the gauge norm $\norm{\argument}_q$; this follows from the closed graph theorem.

A linear projection $P\colon E\to E$ is called a \emph{band projection} if $0 \leq P \leq I$. 
In this case, $B\coloneqq PE$ is a \emph{projection band} and $I-P$ is the band projection onto $B^\bot$.
If every (principal) band in $E$ is a projection band, $E$ is said to have the \emph{(principal) projection property}.
The Banach lattice $F$ is said to be \emph{(countably) order complete} (or \emph{($\sigma$-)Dedekind complete}) 
if every (countable) order bounded subset of $F$ has a supremum and an infimum in $F$.
Every (countably) order complete vector lattice has the (principal) projection property \cite[Thm 1.2.9 and Prop 1.2.11]{meyer1991}.
Moreover, if $F$ is order complete, then $\regOps(E,F)$ is itself an order complete vector lattice \cite[Thm 1.3.2]{meyer1991}.

A net $(x_\lambda)_{\lambda\in \Lambda}$ in $E$ is called \emph{order convergent} to $x \in E$ if
there exists a decreasing net $(z_\lambda')_{\lambda'\in \Lambda'}$ in $E$ with $\inf z_{\lambda'} = 0$ such that for each $\lambda' \in \Lambda'$ 
there exists $\lambda_0 \in \Lambda$ such that $\abs{x_\lambda-x}\leq z_{\lambda'}$ for every $\lambda \geq \lambda_0$;
this can easily be seen to be equivalent to the following property:
there exists a non-empty set $S \subseteq E_+$ with infimum $0$ and such that
for every $s \in S$ there exists $\lambda_0 \in \Lambda$ with the property $\abs{x_\lambda - x} \le s$ for all $\lambda \ge \lambda_0$.
An operator $T\in \regOps(E,F)$ is \emph{order continuous} if $(Tx_\lambda)$ order converges to $Tx$ for every net $(x_\lambda)$ in  $E$ 
that order converges to a point $x \in E$. 

The Banach lattice $E$ is said to have \emph{order continuous norm}
if $\inf \norm{x_\alpha}=0$ for every decreasing net $(x_\alpha)\subseteq E_+$ with $\inf x_\alpha =0$.
For further important properties of these spaces we refer to \cite[Sec II.5]{schaefer1974} or \cite[Sec 2.4]{meyer1991}.
It is worthwhile pointing out that every $L^p$-space for $p\in [1,\infty)$ (over an arbitrary measure space) 
has order continuous norm.
If the norm on $E$ is order continuous, then $E$ is in particular order complete \cite[Thm 2.4.2]{meyer1991} 
and every regular operator from $E$ to $F$ is order continuous.

\subsection*{Kernel operators} 
Let $E$ and $F$ be Banach lattices such that $F$ is order complete.  We denote by $E'\otimes F$ the space of all finite rank operators from $E$ to $F$.
The elements of $(E'\otimes F)^{\bot\bot}$, the band generated by $E'\otimes F$ within the vector lattice $\regOps(E,F)$, are called \emph{kernel operators}.
We call a regular operator $T\in\regOps(E,F)$ a \emph{partial kernel operator} 
if either $T$ is zero or there exists 
a positive and non-zero kernel operator $K\in (E'\otimes F)^{\bot\bot}$ such that $\abs{T} \geq K$.

While the above definition of kernel operators is rather abstract, 
there exists a more explicit description on concrete function spaces.
In fact, order continuous positive kernel operators on $L^p$-spaces can be represented by integral kernels as follows:

Let $p,q \in [1,\infty]$, let $(\Omega_1,\Sigma_1,\mu_1)$ and $(\Omega_2,\Sigma_2,\mu_2)$ be $\sigma$-finite measure spaces
and let $(\Omega,\Sigma,\mu)$ denote their product space. 
A positive linear operator
\[ T\colon L^p(\Omega_1,\Sigma_1,\mu_1)\to L^q(\Omega_2,\Sigma_2,\mu_2) \]
is an order continuous kernel operator if and only if there exists a measurable function $k \colon  \Omega \to \R_+$ 
such that for every $f\in L^p(\Omega_1,\Sigma_1,\mu_1)$ the following holds: 
for $\mu_2$-almost every $y\in \Omega_2$  
the function $f(\argument)k(\argument,y)$ is in $L^1(\Omega_1,\Sigma_1,\mu_1)$ and the equality
\begin{align}
	\label{eqn:integralop}
	Tf = \int_{\Omega_1} f(x)k(x,\argument) \dx \mu_1(x).
\end{align}
holds in $L^q(\Omega_2,\Sigma_2,\mu_2)$. 
For this equivalence we refer to \cite[Prop~IV.9.8]{schaefer1974}.
In this case the function $k$ (which is uniquely determined up to a nullset) 
is called the \emph{integral kernel} of $T$.

\section{The Range of Kernel Operators}
\label{sec:range-of-kernel-ops}

Throughout this section let $E$ and $F$ be Banach lattices.
We start with a simple auxiliary result about order continuous functionals.

\begin{lemma}
	\label{lem:order-continuous-separating}
	Assume that $F$ is order complete and that $F'$ contains a strictly positive order continuous functional. 
	Then the order continuous functionals in $F'$ separate $F$, 
	i.e., for every non-zero $y \in F$ there exists an order continuous functional $\psi \in F'$ 
	such that $\langle \psi, y \rangle \not= 0$.
\end{lemma}

\begin{proof}
	Fix a strictly positive order continuous functional $\varphi \in F'$ and let $y \in F$ be non-zero. 
	As $F$ is order complete, there exist band projections $P,Q$ on $F$ 
	onto the bands generated by $y^+$ and $y^-$, respectively.
	Since the order continuous functionals are a band in $F'$ \cite[Cor on p.\,74]{schaefer1974}, $P'\phi$ and $Q'\phi$ are 
	order continuous. 
	As $y\neq 0$ and $\phi$ is strictly positive,
	it follows that $\applied{P'\phi}{y} = \applied{\phi}{y^+}>0$ or $\applied{Q'\phi}{y}=\applied{\phi}{y^-}>0$.
%
%
%
\end{proof}

We are now going to derive one implication of our (abstract) main result from a characterization of kernel operators by means of nuclear operators 
that goes back to Nagel and Schlotterbeck (see \cite[Satz~5]{nagel1972} or, for an English reference, \cite[Theorem~IV.9.6]{schaefer1974}).

\begin{theorem}
	\label{thm:kernel-approx-non-sep}
	Assume that $E_+$ contains a quasi-interior point, 
	let $F$ be order complete, and assume that $F'$ contains a strictly positive order continuous functional.
	Then for every positive kernel operator $T \colon E \to F$ 
	there exists an at most countable set $H \subseteq F_+$ such that for all $x\in E_+$
	\[ Tx = \sup \big([0,Tx]\cap \R_+H \big).\]
\end{theorem}

\begin{proof}
	Let $z \in E_+$ be a quasi-interior point 
	and let $\varphi \in F'$ be a strictly positive order continuous functional. 
	We will first show that there exists an at most countable set $H \subseteq [0,Tz] \subseteq F_+$ 
	such that $Tx = \sup \big([0,Tx]\cap H \big)$ for all $x \in [0,z]$.
	
	To this end we endow the principal ideal $E_z$ with its gauge norm, which turns $E_z$ into an AM-space.
	Moreover, we denote the completion of $F$ with respect to the norm $y \mapsto \applied{\phi}{\abs{y}}$ by $F^\varphi$. 
	Then $F^\varphi$ is an AL-space. Since $\varphi$ is order continuous and $F$ is order complete, $F$ is an ideal in $F^\varphi$ 
	\cite[bottom of p.\,243]{schaefer1974}. 
	According to Lemma~\ref{lem:order-continuous-separating} the order continuous functionals in $F'$ separate $F$, 
	so the Nagel--Schlotterbeck characterization of kernel operators 
	\cite[Thm~IV.9.6]{schaefer1974} is applicable and yields that the composition operator
	\begin{align*}
		 E_z \hookrightarrow E \overset{T}{\to} F \hookrightarrow F^\varphi
	\end{align*}
	is nuclear, where the mappings on the left and on the right are the canonical injections. 
	This means that there exist sequences $(\alpha_n)$ in $(E_z)'$ and $(y_n)$ in $F^\varphi$ 
	such that $\sum_n \norm{\alpha_n} \norm{y_n} < \infty$ and 
	\begin{align*}
		T x = \sum_n \applied{\alpha_n}{x} y_n
	\end{align*}
	for each $x \in E_z$. Note that the series converges absolutely in $F^\varphi$. 
	By splitting each $\alpha_n$ and each $y_n$ into positive and negative part and regrouping the series, 
	we find sequences $(\beta_n)$, $(\gamma_n)$ in $(E_z)'_+$ and $(v_n)$, $(w_n)$ in $(F^\varphi)_+$ 
	such that $\sum_n \norm{\beta_n} \norm{v_n} < \infty$ and $\sum_n \norm{\gamma_n} \norm{w_n} < \infty$ and 
	\begin{align*}
		Tx = \sum_n \applied{\beta_n}{x} v_n - \sum_n \applied{\gamma_n}{x} w_n
	\end{align*}
	for all $x \in E_z$. Again, both series converge absolutely in $F^\varphi$. 
	Now let $H_1$ denote the at most countable set of all vectors in $(F^\phi)_+$ of the form 
	\begin{align*}
		\sum_{n = 1}^N q_n v_n,
	\end{align*}
	where $N$ runs through $\N$ and the $q_n$ run through $\Q$. 
	Similarly, let $H_0$ denote the at most countable set of all vectors in $(F^\varphi)_+$ of the form 
	\begin{align*}
		\sum_{n = 1}^N q_n w_n  +  \sum_{n=N+1}^\infty \applied{ \gamma_n}{ z }w_n,
	\end{align*}
	where again $N$ runs through $\N$ and the $q_n$ run through $\Q$.
	Finally, define
	\begin{align*}
		H \coloneqq \big\{ (h_1 - h_0)^+ \wedge Tz :  h_1 \in H_1 \text{ and } h_0 \in H_0 \big\}.
	\end{align*}
	Since $Tz \in F_+$ and $F$ is an ideal in $F^\varphi$, one has $H\subseteq [0,Tz] \subseteq F_+$.
	
	Now let $x \in [0,z]$. 
	Then there exists a sequence $(h_{n,1})$ in $H_1$ which converges, with respect to the norm in $F^\varphi$, to 
	\begin{align*}
		\sum_n \applied{ \beta_n}{ x } v_n,
	\end{align*}
	and which is dominated by this limit. 
	Similarly, due to the choice of the elements of $H_0$, 
	there exists a sequence $(h_{n,0})$ in $H_0$ which converges, again with respect to the norm in $F^\varphi$, to 
	\begin{align*}
		\sum_n \applied{ \gamma_n }{ x } w_n,
	\end{align*}
	and which dominates this limit. 
	Therefore, 
	$(h_{n,1} - h_{n,0})$ is a sequence in $H_1-H_0$ that converges to $Tx$ with respect to the norm on $F^\varphi$
	and which is order bounded above by $Tx$. 
	This implies that the sequence of elements $(h_{n,1} - h_{n,0})^+ \wedge Tz\in H$ is contained in $[0,Tx]$ and converges 
	to $Tx$ with respect to the norm on $F^\varphi$. 
	So $Tx$ is the supremum of $[0,Tx] \cap H$ within the space $F^\varphi$ and thus, in particular, within the space $F$, 
	as claimed at the beginning of the proof.

	This readily implies that $Tx = \sup \big([0,Tx]\cap \R_+ H \big)$ for each $0 \le x \in E_z$. 
	That the same equation even holds for every $0 \le x \in E$ follows from the fact that $x \wedge (nz) \uparrow x$ in $E$-norm for each $0 \le x \in E$
	since $z$ is a quasi-interior point of $E_+$
	\cite[Thm~II.6.3]{schaefer1974}.
\end{proof}

The asssumption that $E_+$ contains a quasi-interior point cannot be dropped in Theorem \ref{thm:kernel-approx-non-sep}.
Indeed, if $X$ is an uncountable set, then the identity operator on the space $E \coloneqq F \coloneqq \ell^1(X)$ 
is a kernel operator and the norm functional on $F$ is strictly positive and order continuous.
However, the conclusion of Theorem~\ref{thm:kernel-approx-non-sep} fails. 
The only assumption that is not satisfied is the existence of a quasi-interior point in $E_+$.

For the case of $L^p$-spaces, it is natural to ask whether one can choose 
the functions in $H$ in Theorem~\ref{thm:kernel-approx-non-sep} as indicator functions.
The following proposition, a consequence of Freudenthal's spectral theorem, 
shows that this is possible.

\begin{proposition}
	\label{prop:quasi-units}
	Let $F$ be countably order complete,
	let $R \subseteq F_+$, and assume that there exists an at most countable set $H \subseteq F_+$ 
	such that $y = \sup\big( [0,y] \cap \R_+ H \big)$ for all $y \in R$. 
	If $q \in F_+$ is a weak order unit, then there exists an at most countable set of quasi-units $U \subseteq Q(q)$ 
	such that $y = \sup\big( [0,y] \cap \R_+ U \big)$ for all $y \in R$.
\end{proposition}

\begin{proof}
	Every $h \in H$ is the supremum of $\{ h \wedge nq : n\in \N\}$, see \cite[Prop~II.2.11]{schaefer1974}. 
	Hence, when replacing  $H$ with $\{h \wedge nq  : h \in H \text{, }n \in \N\}$, 
	the formula $y = \sup\big( [0,y] \cap \R_+ H \big)$ remains valid for all $y \in R$. 
	Thus we may, and shall, assume that $H$ is contained in the principal ideal $F_q$. 
	As $F$ is countably order complete, it has the principal projection property \cite[Cor~2 on p.\,64]{schaefer1974}. 
	So Freudenthal's spectral theorem \cite[Thm~40.2]{luxemburg1971} is applicable
	and implies that each $h \in H$ is the supremum of an at most countable subset $U_h \subseteq Q(q)$. 
	Hence, the set $U \coloneqq \cup_{h \in H} U_h$ has all the required properties.
\end{proof}

Our next goal is to show that, on many spaces, the approximation obtained by 
Theorem~\ref{thm:kernel-approx-non-sep} even characterizes positive 
kernel operators (Theorem~\ref{thm:q-cont-approx-kernel-operators}).
For the proof we use the following auxiliary result.

\begin{lemma}
	\label{lem:grobler-van-eldik}
	Assume that $E$ has order continuous norm and that $F$ is order complete.
	If $T \in \cL(E,F)_+$ is norm--order continuous on order bounded sets, then $T$ is a kernel operator.
\end{lemma}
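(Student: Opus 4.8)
The plan is to show that the band projection of $T$ onto the disjoint complement $(E'\otimes F)^\bot$ vanishes, which is exactly the assertion $T\in(E'\otimes F)^{\bot\bot}$. Since $F$ is order complete, $\regOps(E,F)$ is an order complete vector lattice, so I may write $T=T_k+T_s$ with $T_k\in(E'\otimes F)^{\bot\bot}$ (the kernel part) and $T_s\in(E'\otimes F)^\bot$; as $T\ge 0$ and band projections are positive, $0\le T_k,T_s\le T$. Everything then reduces to proving $T_s=0$.

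First I would check that $T_s$ inherits the continuity hypothesis. If $(x_n)$ is order bounded and $x_n\to x$ in norm, then $\abs{x_n-x}$ is order bounded and norm-null, so by hypothesis $T\abs{x_n-x}$ order-converges to $0$. Since $0\le T_s\le T$, the estimate $\abs{T_s x_n-T_s x}\le T_s\abs{x_n-x}\le T\abs{x_n-x}$, together with the fact that a sequence dominated by an order-null sequence is itself order-null, shows that $T_sx_n$ order-converges to $T_sx$. Hence $T_s$ is again a positive operator that is norm--order continuous on order bounded sets, and it suffices to prove: a positive operator $S\in(E'\otimes F)^\bot$ that is norm--order continuous on order bounded sets must be $0$.

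The crux is this last assertion, and here I would argue by contradiction, exploiting the disjointness of $S$ from every rank-one operator. Assume $S\ne 0$ and fix $x_0\in E_+$ with $Sx_0>0$. Disjointness means $S\wedge(\phi\otimes y)=0$ for all $\phi\in E'_+$ and $y\in F_+$; written out via the Riesz--Kantorovich formula, this says that $x_0$ admits splittings into ever finer positive components along which $S$ cannot simultaneously keep both its image and the tested mass large. The aim is to turn this into an order bounded sequence $(x_n)$ in $[0,x_0]$ with $\norm{x_n}\to 0$ but such that $Sx_n$ does \emph{not} order-converge to $0$ -- the abstract analogue of feeding a ``typewriter sequence'' of indicator functions into the identity operator on $L^1$. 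Such a sequence contradicts the norm--order continuity of $S$ on the order interval $[0,x_0]$, forcing $S=0$.

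The main obstacle is precisely the construction of this bad sequence, and it is the only place where order continuity of the norm on $E$ is genuinely used: disjointness by itself produces components that become small only in a weak sense (for instance $\applied{\phi}{\argument}\to 0$ for a fixed functional, or weak nullity via weak compactness of the order interval $[0,x_0]$), whereas the hypothesis is stated for \emph{norm} convergence. Order continuity of the norm on $E$ is what should upgrade weak nullity of the components to genuine norm-nullity while their $S$-images stay bounded away from $0$ in order. I note that this assertion is exactly the substantive direction of the classical Bukhvalov--Schep characterization of the band of kernel operators in its abstract form, so an alternative and shorter route is to verify that ``norm--order continuity on order bounded sets'' coincides with the continuity condition used by Grobler and van~Eldik \cite{grobler1980} and to invoke their theorem directly.
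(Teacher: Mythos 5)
Your overall strategy -- split $T = T_k + T_s$ along the band decomposition, observe that $T_s$ inherits the norm--order continuity, and then show that a positive operator in $(E'\otimes F)^\bot$ with this continuity property must vanish -- is a legitimate route in principle, but your write-up has a genuine gap at exactly the step you yourself flag as ``the crux'': the construction of an order bounded sequence $(x_n)\subseteq[0,x_0]$ with $\norm{x_n}\to 0$ and $Sx_n$ not order-null is never carried out. Everything before that point (the decomposition, the domination estimate $\abs{T_sx_n - T_sx}\le T\abs{x_n-x}$) is routine; the missing construction is the entire mathematical content of the lemma. Extracting such a ``typewriter sequence'' from the Riesz--Kantorovich identity $\inf\{Sy_1 + \applied{\phi}{y_2}\,y : y_1+y_2=x_0,\ y_1,y_2\ge 0\}=0$ requires a nontrivial exhaustion argument (this is essentially the hard direction of the Bukhvalov/Grobler--van Eldik theorem), and your remarks about ``upgrading weak nullity to norm nullity'' describe a hope rather than a proof. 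The fallback of citing \cite{grobler1980} directly is also not free of charge: the paper explicitly notes that the hypotheses there are ``slightly different and more technical,'' so the translation would itself need to be checked.

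It is worth contrasting this with the paper's proof, which avoids the contradiction argument entirely and never touches the singular part. There one lets $S$ be the \emph{largest} kernel operator below $T$, fixes $x\in E_+$, and uses the hypothesis only to produce an increasing net $(z_\alpha)$ with supremum $Tx$ such that $Ty\ge z_\alpha$ whenever $y\in[0,x]$ is within $\varepsilon$ of $x$. Then, taking $\phi\in E'_+$ strictly positive on $[0,x]$ and rescaled (via order continuity of the norm on $E$) so that $\applied{\phi}{y}\ge 1$ whenever $y\in[0,x]$ has $\norm{y}\ge\varepsilon$, one computes directly
\[
Sx \;\ge\; \bigl(T\wedge(\phi\otimes z_\alpha)\bigr)x \;=\; \inf\{Ty_1 + \applied{\phi}{y_2}z_\alpha : y_1+y_2=x,\ y_1,y_2\ge 0\}\;\ge\; z_\alpha,
\]
since for each decomposition either $\norm{y_2}\ge\varepsilon$ (so the second summand dominates $z_\alpha$) or $y_1$ is $\varepsilon$-close to $x$ (so $Ty_1\ge z_\alpha$). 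Taking the supremum over $\alpha$ gives $Sx=Tx$. The Riesz--Kantorovich formula is thus used \emph{for} you rather than against you: instead of showing that disjointness from rank-one operators forces a pathological sequence, one exhibits concrete kernel operators $T\wedge(\phi\otimes z_\alpha)$ below $T$ whose values at $x$ already exhaust $Tx$. If you want to salvage your approach, you would need to supply the full exhaustion construction; otherwise I recommend reorganizing along the direct lower-bound argument.
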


Here, \emph{norm--order continuous on order bounded sets} means that for every order bounded net $(x_\alpha)$ in $X$ that norm converges to $x \in X$,
the net $(Tx_\alpha)$ in $F$ order converges to $Tx$.
It is not difficult to see that this is equivalent to the formally weaker property that for every $x \in E_+$ and every net $(x_\alpha)$ in $[0,x]$ 
that norm converges to $x$, the net $(Tx_\alpha)$ order converges to $Tx$.

The result in Lemma~\ref{lem:grobler-van-eldik} is essentially known (under slightly different and more technical assumptions) 
and can be adapted to yield even a characterization of regular kernel operators \cite[Thm~3.9]{grobler1980}, 
which is an abstract version of Bukhvalov's characterization of kernel operators by star--order continuity \cite{bukhvalov1975}.
However, under the assumptions made in the lemma it is possible to give a particularly easy and transparent proof:

\begin{proof}[Proof of Lemma~\ref{lem:grobler-van-eldik}]
	Let $S \in \cL(E,F)_+$ be the largest kernel operator that is dominated by $T$ and fix $x \in E_+$.
	We show that $Sx = Tx$.
	
	Consider the set
	\begin{align*}
		\Lambda = \{(\varepsilon, y) \in (0,\infty) \times [0,x] :  \norm{x - y} < \varepsilon\},
	\end{align*}
	which is rendered a directed set by the relation $\preceq$ given by $(\varepsilon_1, y_1) \preceq (\varepsilon_2, y_2)$ if and only $\varepsilon_2 \le \varepsilon_1$. 
	Then $(y)_{(\varepsilon, y) \in \Lambda}$ is a net in $[0,x]$ that norm converges to $x$ and hence, $(Ty)_{(\varepsilon, y) \in \Lambda}$ order converges to $Tx$. 
	So there exists an increasing net $(z_\alpha)_{\alpha \in A}$ in $F$ with supremum $Tx$ such that for each $\alpha \in A$ 
	there is $\varepsilon \in (0,\infty)$ such that $Ty \ge z_{\alpha}$ for each $y \in [0,x]$ that is closer than $\varepsilon$ to $x$. 
	Now, fix $\alpha \in A$ and choose $\varepsilon \in (0,\infty)$ with the aforementioned property.
	It suffices to prove that $Sx \ge z_{\alpha}$.

	As $E$ has order continuous norm there exists a functional $\phi \in E'_+$ which is strictly positive on $[0,x]$ \cite[Thm 4.15]{aliprantis19852006}.
	Moreover, also due to the order continuity of the norm on $E$, 
	by rescaling $\phi$ one can achieve that $\applied{\phi}{y} \ge 1$ for all $y \in [0,x]$ of norm $\norm{y} \ge \varepsilon$ \cite[Lem 4.16]{aliprantis19852006}. 
	This implies that
	\begin{align*}
		Sx & \ge \big(T \wedge (\phi \otimes z_{\alpha})\big) x \\
		& = \inf \{Ty_1 + \applied{\phi}{y_2}z_{\alpha} :  y_1,y_2 \in [0,x] \text{, } y_1 + y_2 = x\} \ge z_\alpha,
	\end{align*}
	since either $\norm{y_2} \ge \varepsilon$ and thus $\applied{\phi}{y_2} \ge 1$, or $\norm{x-y_1} = \norm{y_2} < \varepsilon$ and thus $Ty_1 \ge z_{\alpha}$.
\end{proof}

\begin{theorem}
	\label{thm:q-cont-approx-kernel-operators}
	Let $E$ have continuous norm and let $F$ be order complete such that $F'$ contains a strictly positive order continuous functional.
	Let $T\in \cL(E,F)_+$ and assume that there exists an at most countable set $H\subseteq F_+$ such that 
	$Tx = \sup ([0,Tx]\cap \R_+ H)$ for every $x\in E_+$. 
	Then $T$ is a kernel operator.
\end{theorem}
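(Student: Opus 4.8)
The plan is to verify that $T$ is norm--order continuous on order bounded sets and then to invoke Lemma~\ref{lem:grobler-van-eldik}. By the equivalent (formally weaker) formulation recorded after that lemma, it suffices to fix $x\in E_+$ together with a net $(x_\alpha)$ in $[0,x]$ with $\norm{x_\alpha-x}\to 0$ and to show that $(Tx_\alpha)$ order converges to $Tx$. Writing $w_\alpha\coloneqq x-x_\alpha\ge 0$, we have $0\le Tw_\alpha=Tx-Tx_\alpha\le Tx$, so the net $(Tw_\alpha)$ is order bounded; since $F$ is order complete, the elements $p_\alpha\coloneqq\sup_{\beta\succeq\alpha}Tw_\beta$ exist, decrease in $\alpha$, and order convergence $Tx_\alpha\to Tx$ is equivalent to $\inf_\alpha p_\alpha=0$. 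Denoting by $\psi\in F'_+$ the given strictly positive order continuous functional, strict positivity reduces this to $\inf_\alpha\applied{\psi}{p_\alpha}=0$, and the order continuity of $\psi$ lets us compute $\applied{\psi}{p_\alpha}$ through finite suprema.

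Next I would exploit the hypothesis to rewrite $Tw_\alpha$ in terms of $H$. Enumerating $[0,Tx]\cap H=\{h_1,h_2,\dots\}$ and setting $g_n\coloneqq h_1\vee\dots\vee h_n$, the assumption gives $g_n\uparrow Tx$, whence
\[ Tw_\alpha=\Big(\sup_n g_n\Big)-Tx_\alpha=\sup_n\,(g_n-Tx_\alpha)^+=\sup_{i\in\N}\,(h_i-Tx_\alpha)^+ , \]
using the lattice identity $(g_n-Tx_\alpha)^+=\bigvee_{i\le n}(h_i-Tx_\alpha)^+$. Because $\psi$ is order continuous we have $\applied{\psi}{Tx-g_n}\to 0$, and from $(g_n-Tx_\beta)^+=\bigvee_{i\le n}(h_i-Tx_\beta)^+$ together with $\sup_{\beta\succeq\alpha}\bigvee_{i\le n}(\,\cdot\,)\le\sum_{i\le n}\sup_{\beta\succeq\alpha}(\,\cdot\,)$ one obtains, for every $n$,
\[ \applied{\psi}{p_\alpha}\le\sum_{i\le n}\applied{\psi}{\sup_{\beta\succeq\alpha}(h_i-Tx_\beta)^+}+\applied{\psi}{Tx-g_n}. \]
Passing to $\inf_\alpha$ and then letting $n\to\infty$, the whole statement is reduced to the following single--vector claim: for each fixed $h\in[0,Tx]\cap H$ one has $\inf_\alpha\sup_{\beta\succeq\alpha}(h-Tx_\beta)^+=0$.

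The verification of this single--vector claim is the crux of the argument and the step I expect to be the main obstacle. For each individual $\beta$ one has $(h-Tx_\beta)^+\le Tw_\beta$ and hence $\applied{\psi}{(h-Tx_\beta)^+}\to 0$; however, smallness in $\psi$ (equivalently, in norm) of the \emph{individual} terms does not by itself force the order--limsup $\sup_{\beta\succeq\alpha}(h-Tx_\beta)^+$ to vanish, as the identity operator on $L^2$ illustrates, where norm--small perturbations may ``spread'' so that their supremum stays bounded away from $0$. Ruling this out is precisely where the \emph{countability} of $H$, and the fact that the \emph{same} set $H$ represents $Tx_\beta$ for every $\beta$, must enter. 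Concretely, I would argue by contradiction: assuming $b\coloneqq\inf_\alpha\sup_{\beta\succeq\alpha}(h-Tx_\beta)^+>0$, I would use the order continuity of $\psi$ to extract from the tails of the net countably many indices realizing a fixed fraction of $\applied{\psi}{b}$, and then feed the representation $Tx_\beta=\sup\big([0,Tx_\beta]\cap H\big)$ back in to exhibit a positive vector whose image under $T$ cannot be recovered as a supremum over the fixed countable set $H$ -- contradicting the hypothesis for a suitable input and forcing $b=0$.

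Once the single--vector claim is established, the reduction of the second paragraph yields $\inf_\alpha p_\alpha=0$, i.e.\ $Tx_\alpha\to Tx$ in order. As $x\in E_+$ and the net were arbitrary, $T$ is norm--order continuous on order bounded sets, and Lemma~\ref{lem:grobler-van-eldik} shows that $T$ is a kernel operator.
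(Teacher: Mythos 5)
Your setup matches the paper's: reduce, via Lemma~\ref{lem:grobler-van-eldik} and the remark following it, to showing that $(Tx_\alpha)$ order converges to $Tx$ for a net $(x_\alpha)\subseteq[0,x]$ with $\norm{x_\alpha-x}\to 0$, and use the strictly positive order continuous functional to test order convergence. Your algebraic reduction to the single--vector claim $\inf_\alpha\sup_{\beta\succeq\alpha}(h-Tx_\beta)^+=0$ is also sound. But that claim is exactly the hard part of the theorem, and you do not prove it: you correctly observe that smallness of the individual terms $\applied{\psi}{(h-Tx_\beta)^+}$ does not control the order--limsup, and your proposed remedy (``extract countably many indices realizing a fixed fraction of $\applied{\psi}{b}$ and feed the representation back in to exhibit a contradiction'') is a programme, not an argument. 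As stated there is no identified input vector, no contradiction, and no point where the countability of $H$ is actually used. So the proposal has a genuine gap at its crux.

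The paper closes precisely this gap with a Baire category argument that your sketch does not contain. Fix $\varepsilon>0$ and consider $M=\{y\in[0,x]:\norm{x-y}<\varepsilon\}$, an open subset of the complete metric space $[0,x]$ and hence a Baire space. Using the hypothesis (with $H$ made upwards directed) and the order continuity of $\Phi$, every $y\in M$ lies in one of the countably many relatively closed sets $\{y\in M: Ty\ge z\}$, $z\in Z$, where $Z=\{z\in[0,Tx]\cap H:\applied{\Phi}{Tx-z}\le 2\varepsilon\}$; Baire's theorem then yields a single $z_\varepsilon\in Z$ for which this set has an interior point $y_0$. The second ingredient is the translation trick: for $y\in[0,x]$ with $\norm{x-y}\le\delta$, the vector $\tilde y=(y_0+y-x)^+\le y$ satisfies $\norm{y_0-\tilde y}\le\norm{x-y}\le\delta$, so $Ty\ge T\tilde y\ge z_\varepsilon$. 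This converts information at the interior point $y_0$ into a uniform lower bound $Ty\ge z_\varepsilon$ on a whole neighborhood of $x$ in $[0,x]$, which is what forces the tails $p_\alpha$ down to $0$. This is where the countability of $H$ and the fixed representation enter; without some such uniformization step your single--vector claim remains unestablished, and the proof is incomplete.
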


\begin{proof}
	By replacing $H$ with $\Q_+ H$ we may and shall assume that even $Tx = \sup ([0,Tx]\cap H)$ for every $x\in E_+$. 
	Moreover, there is no loss of generality in assuming that $\norm{T} = 1$.
	Finally we may, and will, assume that $H$ is closed under finite suprema and thus upwards directed.
	Let $\Phi \in F'$ be strictly positive, order continuous and of norm $\norm{\Phi}=1$.	
	
	Fix $x \in E_+$ and let $(x_\lambda)_{\lambda \in \Lambda}$ be a net in $[0,x]$ that is norm convergent to $x$.
	According to Lemma~\ref{lem:grobler-van-eldik} and the discussion after the lemma it suffices to show that 
	$(Tx_\lambda)_{\lambda \in \Lambda}$ order converges to $Tx$.
	To this end, we will now show that for each $\varepsilon > 0$ there exist a number $\delta > 0$ and a vector 
	$z_\varepsilon \in [0,Tx]$ with the following properties:
	\begin{enumerate}[(a)]
		\item
		For each $y \in [0,x]$ that satisfies $\norm{x-y} \le \delta$ we have $Ty \ge z_\varepsilon$.
		
		\item 
		One has $\applied{\Phi}{Tx - z_\varepsilon} \le 2\varepsilon$.
	\end{enumerate}
	Property~(b) then implies that $\sup_{\varepsilon > 0} z_\varepsilon = Tx$ and property~(a) implies that, 
	for each $\varepsilon > 0$, one has $Tx_\lambda \ge z_\varepsilon$ for all sufficiently large $\lambda$;
	thus, $(Tx_\lambda)_{\lambda \in \Lambda}$ order converges to $Tx$.
	
	So fix $\varepsilon > 0$.
	Let us consider the subset 
	\[ M \coloneqq  \bigl\{ y \in [0,x] : \norm{x-y} < \varepsilon \bigr\} \]
	of $E$; it is an open subset of the topological space $[0,x]$ and thus a Baire space.
	
	We claim that for each $y \in M$ we can find a vector $z \in [0,Ty] \cap H$ that satisfies $\applied{\Phi}{Tx - z} \leq 2 \varepsilon$.
	Indeed, fix $y \in M$. We have $Ty = \sup([0,Ty] \cap H)$ by assumption, so since $H$ is closed under finite suprema 
	and $\Phi$ is order continuous, there exists a vector $z \in [0, Ty]\cap H$ that satisfies $\applied{\Phi}{Ty-z} \le \varepsilon$ and thus,
	\[ \applied{\Phi}{Tx-z} = \applied{\Phi}{Tx-Ty} + \applied{\Phi}{Ty-z} \leq 2\varepsilon.  \]
	In other words,
	\[ M \subseteq \bigcup_{z \in Z} \big\{y \in M : Ty \ge z \big\}, \]
	where 
	\begin{align*}
		Z \coloneqq \{z \in [0,Tx] \cap H: \applied{\Phi}{Tx-z} \le 2\varepsilon \}
	\end{align*}
	As $Z$ is at most countable and $M$ is a Baire space, there exists a vector $z_\varepsilon \in Z$ such 
	that the set $\big\{y \in M : \; Ty \ge z_\varepsilon \big\}$ has non-empty interior within $M$.
	
	Obviously, $z_\varepsilon$ satisfies (b), and in order to show that it also satisfies (a) we 
	choose a point $y_0$ in the interior of $\big\{y \in M :  Ty \ge z_\varepsilon \big\}$ within $M$.
	Let $\delta > 0$ such that all $\tilde y \in M$ that satisfy $\norm{y_0 - \tilde y} \le \delta$ are 
	also in $\big\{y \in M : Ty \ge z_\varepsilon \big\}$ and such that $\norm{x-y_0} + \delta < \varepsilon$. 
	Now, let $y \in M$ such that $\norm{x-y} \le \delta$. 
	Then the vector
	\begin{align*}
		\tilde y \coloneqq (y_0 + y - x)^+ \le y
	\end{align*}
	is in $M$ (since $\norm{x-\tilde y} \le \norm{x-y_0} +\norm{x-y}$) and satisfies $\norm{y_0 - \tilde y} \le \delta$ 
	(since $0 \le y_0 - \tilde y = (x-y) \wedge y_0 \le x-y$). 
	Hence, $Ty \ge T\tilde y \ge z_\varepsilon$, where the latter inequality follows from the choice of $\delta$.
\end{proof}

Now we have all partial results at hand to obtain our main result, a new characterization of kernel operators.

\begin{theorem}
	\label{thm:kernel-characterization}
	Assume that $E$ has order continuous norm and contains a quasi-interior point.
	Let $F$ be order complete such that $F'$ contains a strictly positive and order continuous functional.
	For each $T\in \cL(E,F)_+$ the following are equivalent:
	\begin{enumerate}[\upshape (i)]
		\item The operator $T$ is a kernel operator.
		\item There exists an at most countable $H \subseteq F_+$ such that for all $x\in E_+$ one has $Tx=\sup \big([0,Tx]\cap \R_+H\big)$.
	\end{enumerate}
	If the equivalent assertions {\upshape(i)} and {\upshape(ii)} hold, $H$ can be chosen as a subset of $Q(q)$ for any weak unit $q$ of $(TE)^{\bot\bot}$.
\end{theorem}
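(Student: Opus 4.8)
The plan is to read off the statement from the two main auxiliary results already established, namely Theorem~\ref{thm:kernel-approx} for the implication ``(i) $\Rightarrow$ (ii)'' and Theorem~\ref{thm:q-cont-approx-kernel-operators} for ``(ii) $\Rightarrow$ (i)'', together with a reduction to the band generated by the range in order to accommodate the weak unit $q$.

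For ``(ii) $\Rightarrow$ (i)'' there is essentially nothing left to do. Given the at most countable set $U$ from~(ii), I would pass to the still at most countable set $H \coloneqq \Q_+ U \subseteq F_+$. For every $x \in E_+$ and every $\lambda u \in [0,Tx] \cap \R_+U$ one can choose rationals $\lambda_n \uparrow \lambda$, so that $\lambda_n u \in [0,Tx] \cap H$ and $\lambda_n u \uparrow \lambda u$; hence $\sup([0,Tx] \cap H) = \sup([0,Tx] \cap \R_+U) = Tx$. Since $E$ has order continuous norm, $F$ is order complete and $F'$ contains a strictly positive order continuous functional, Theorem~\ref{thm:q-cont-approx-kernel-operators}, applied with this $H$, shows that $T$ is a kernel operator.

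For ``(i) $\Rightarrow$ (ii)'', including the final refinement, fix a weak unit $q$ of the band $B \coloneqq (TE)^{\bot\bot}$ and regard $T$ as a positive operator from $E$ into $B$. The strategy is to verify that all hypotheses of Theorem~\ref{thm:kernel-approx} hold for $T \colon E \to B$ with respect to $q$ and then to apply that theorem. Most of the hypotheses transfer for soft reasons: $E$ is separable; $B$, being a band in the order complete lattice $F$, is itself order complete; the restriction to $B$ of a strictly positive order continuous functional on $F$ is again strictly positive and order continuous, because infima and suprema of subsets of $B$ agree with those computed in $F$; and $q$ is by construction a weak unit of $B$. The same remark about suprema shows that the formula $Tx = \sup([0,Tx] \cap \R_+U)$ produced by Theorem~\ref{thm:kernel-approx} inside $B$ remains valid in $F$, while the quasi-units of $q$ in $B$ coincide with elements of $Q(q) \subseteq F_+$; hence $U \subseteq Q(q)$, as claimed.

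The one step that requires a genuine argument -- and the main obstacle -- is to check that $T$, viewed as an operator into $B$, is still a kernel operator, i.e.\ that $T \in (E' \otimes B)^{\bot\bot}$ within $\regOps(E,B)$. Here I would exploit that, as $F$ is order complete, $B$ is a projection band; let $P \colon F \to B$ be the corresponding band projection. Composition with $P$ defines a map $\Psi \colon \regOps(E,F) \to \regOps(E,F)$, $\Psi(S) = PS$, which satisfies $0 \le \Psi \le \id$ and $\Psi^2 = \Psi$ and is therefore itself a band projection; in particular $\Psi$ is an order continuous lattice homomorphism. Since $TE \subseteq B$ we have $\Psi(T) = T$, and since $P(\phi \otimes y) = \phi \otimes Py$ we have $\Psi(E' \otimes F) = E' \otimes B$. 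Starting from the representation $T = \sup([0,T] \cap J)$ with $J \coloneqq J(E' \otimes F)$ the ideal generated by the finite rank operators, as in the proof of Theorem~\ref{thm:kernel-approx}, the order continuity of $\Psi$ gives $T = \Psi(T) = \sup\{PR : R \in [0,T] \cap J\}$. Each $PR$ satisfies $0 \le PR \le T$ and, because $P$ is a lattice homomorphism, lies in the ideal $J(E' \otimes B)$; consequently $T = \sup([0,T] \cap J(E' \otimes B))$, which means precisely that $T$ belongs to the band $(E' \otimes B)^{\bot\bot}$ in the order complete lattice $\regOps(E,B)$. With this in hand Theorem~\ref{thm:kernel-approx} applies and completes the proof.
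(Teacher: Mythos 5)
Your proposal is correct and follows essentially the same route as the paper: ``(ii) $\Rightarrow$ (i)'' via Theorem~\ref{thm:q-cont-approx-kernel-operators} with $H = \Q_+U$, and ``(i) $\Rightarrow$ (ii)'' via Theorem~\ref{thm:kernel-approx} applied with the codomain cut down to the band $(TE)^{\bot\bot}$; your extra verification that $T$ remains a kernel operator into that band (via the band projection $S \mapsto PS$ on $\regOps(E,F)$) is a detail the paper leaves implicit, and it is sound. The only point you pass over is the \emph{existence} of a weak unit of $(TE)^{\bot\bot}$, which is needed to invoke Theorem~\ref{thm:kernel-approx} at all; the paper obtains it from the separability of $E$ by taking a dense sequence $(x_n)$ and forming $q = \sum_n 2^{-n}\abs{Tx_n}/\norm{Tx_n}$ over the indices with $Tx_n \neq 0$.
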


Note that, under the conditions of the theorem, 
for every quasi-interior point of $z$ of $E_+$, $q\coloneqq Tz$ is always a weak unit of $(TE)^{\bot\bot}$.

\begin{proof}
	(i) $\Rightarrow$ (ii): 
	This implication follows from Theorem \ref{thm:kernel-approx-non-sep}.

	(ii) $\Rightarrow$ (i): 
	This implication follows from Theorem \ref{thm:q-cont-approx-kernel-operators}.
	
	The addendum is a consequence of Proposition~\ref{prop:quasi-units}, 
	applied in the Banach lattice $(TE)^{\bot\bot}$ to the set $T(X_+)$.
\end{proof}

\section{Kernel Operators on $L^p$-spaces}

Now we use our results from Section~\ref{sec:range-of-kernel-ops} to characterize kernel operators on $L^p$-spaces:

\begin{theorem}
	\label{thm:characterization-integral-operators}
	Let $(\Omega_1,\Sigma_1,\mu_1)$ and $(\Omega_2,\Sigma_2,\mu_2)$ be $\sigma$-finite measure spaces.
	For $p\in [1,\infty)$ and $q\in [1,\infty]$ we set $E\coloneqq L^p(\Omega_1, \Sigma_1,\mu_1)$ and $F\coloneqq L^q(\Omega_2, \Sigma_2 , \mu_2)$.
	Then for each $T\in \cL(E,F)_+$ the following are equivalent:
	\begin{enumerate}[\upshape (i)]
		\item $T$ is a kernel operator.
		
		\item There exists a second countable topology $\cT$ on $\Omega_2$ with Borel $\sigma$-algebra $\cB(\cT) \subseteq \Sigma_2$
	such that $Tf$ has a lower semi-continuous representative with values in $[0,\infty]$ for every $f\in E_+$.
	\end{enumerate}
\end{theorem}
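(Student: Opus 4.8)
The plan is to deduce both implications from the abstract characterization in Theorem~\ref{thm:kernel-characterization}, using Proposition~\ref{prop:concrete-abstract-kernels} to pass between integral operators and order continuous kernel operators. First I would verify that the standing hypotheses of Theorem~\ref{thm:kernel-characterization} are met: $E = L^p$ with $p\in[1,\infty)$ is separable by assumption and has order continuous norm, $F = L^q$ is order complete since $\mu_2$ is $\sigma$-finite, and in every case $q\in[1,\infty]$ the dual $F'$ contains a strictly positive order continuous functional --- for $q<\infty$ take a strictly positive $h\in L^{q'}$, and for $q=\infty$ take the normal functional $g\mapsto\int_{\Omega_2} g\,h\dx\mu_2$ induced by a strictly positive $h\in L^1(\Omega_2)$. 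Since $E$ has order continuous norm, every positive $T\in\cL(E,F)_+$ is automatically order continuous, so by Proposition~\ref{prop:concrete-abstract-kernels} the statement ``$T$ is an integral operator'' is equivalent to ``$T$ is a kernel operator''. Thus it remains to show that the present condition~(ii) is equivalent to the existence of an at most countable $U\subseteq F_+$ with $Tf=\sup([0,Tf]\cap\R_+U)$ for all $f\in E_+$, which is precisely condition~(ii) of Theorem~\ref{thm:kernel-characterization}.

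For the implication (i)$\Rightarrow$(ii) I would take such a countable set $U=\{u_k:k\in\N\}\subseteq L^q_+$ from Theorem~\ref{thm:kernel-characterization} and fix nonnegative pointwise representatives $\tilde u_k$. I then let $\cT$ be the topology on $\Omega_2$ generated by the countable subbasis $\{\tilde u_k^{-1}((a,\infty)):k\in\N,\ a\in\Q\}$; this topology is second countable, it renders every $\tilde u_k$ lower semi-continuous, and since all subbasic sets lie in $\Sigma_2$ one obtains $\cB(\cT)\subseteq\Sigma_2$. For fixed $f\in E_+$, writing $\lambda_k\coloneqq\max\{\lambda\ge0:\lambda u_k\le Tf\}$, one has $Tf=\sup_k\lambda_k u_k$ as a countable lattice supremum, which coincides almost everywhere with the genuine pointwise supremum $g\coloneqq\sup_k\lambda_k\tilde u_k$; as a pointwise supremum of the lower semi-continuous functions $\lambda_k\tilde u_k$, the $[0,\infty]$-valued and $\Sigma_2$-measurable function $g$ is lower semi-continuous, and hence a representative of $Tf$ of the required form.

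For the converse (ii)$\Rightarrow$(i) I would fix a countable base $\{B_j:j\in\N\}$ of $\cT$ and set $U\coloneqq\{\mathds{1}_{B_j}:\mathds{1}_{B_j}\in L^q\}\subseteq F_+$, which is again at most countable. Given $f\in E_+$, let $g\colon\Omega_2\to[0,\infty]$ be a lower semi-continuous representative of $Tf$. Each superlevel set $\{g>\alpha\}$ is open, hence a union of basic sets $B_j$, and for $\alpha>0$ any $B_j\subseteq\{g>\alpha\}$ satisfies $\alpha\mathds{1}_{B_j}\le g$, forcing $\mathds{1}_{B_j}\le\alpha^{-1}g\in L^q$ and thus $\mathds{1}_{B_j}\in U$. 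From the pointwise identity $g=\sup\{\alpha\mathds{1}_{B_j}:\alpha\in\Q_{>0},\ B_j\subseteq\{g>\alpha\}\}$, valid for every $[0,\infty]$-valued lower semi-continuous $g$, one reads off that this countable family lies in $[0,Tf]\cap\R_+U$ and has pointwise, hence lattice, supremum $Tf$; consequently $Tf=\sup([0,Tf]\cap\R_+U)$. Theorem~\ref{thm:kernel-characterization} then yields that $T$ is a kernel operator, and the reduction from the first paragraph finishes the proof.

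The step I expect to be most delicate is the repeated passage between the order-theoretic suprema produced by Theorem~\ref{thm:kernel-characterization} and the honest pointwise suprema that govern lower semi-continuity: one must check that, for countable families dominated by $Tf\in L^q$, the lattice supremum agrees almost everywhere with the pointwise supremum of the chosen representatives, absorbing the one exceptional null set per index. The remaining work is bookkeeping --- verifying second countability together with $\cB(\cT)\subseteq\Sigma_2$, and, in the converse direction, the pleasant observation that domination by $g$ automatically places the relevant indicator functions in $L^q$, so that no separate $\sigma$-finiteness truncation is needed and the basic sets of infinite measure are simply never used.
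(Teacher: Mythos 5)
Your proof is correct and follows essentially the same route as the paper: both directions are reduced to the abstract characterization (Theorem~\ref{thm:kernel-characterization}, resp.\ Theorem~\ref{thm:kernel-approx}) via Proposition~\ref{prop:concrete-abstract-kernels}, with the topology generated from the countable set $U$ in one direction and a countable set of indicators of basic open sets recovered in the other. The only (harmless) cosmetic difference is in (i)$\Rightarrow$(ii): the paper first normalizes $U$ to consist of indicator functions using the quasi-unit structure and takes these sets as a subbasis, whereas you take the rational superlevel sets of arbitrary representatives $\tilde u_k$ as a subbasis; your explicit handling of basic sets of infinite measure in the converse direction is a nice touch that the paper leaves implicit.
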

\begin{proof} 
	First note that $E$ and $F$ are both order complete, see \cite[Thm IV.8.22 and Thm IV.8.23]{dunford1958}. 
	Moreover, $E$ has order continuous norm and any element of $E_+$ that is strictly positive almost everyhwere 
	(such an element exists since $(\Omega_1, \mu_1)$ is $\sigma$-finite) is a quasi-interior point of $E_+$. 
	Each element in the dual of $F$ (in case $q=\infty$ in the predual of $F$) is order continuous
	and as $(\Omega_2, \mu_2)$ is $\sigma$-finite, there exists such a functional that is, in addition, strictly positive.
	Thus, the assumptions of Theorem \ref{thm:kernel-characterization} are satisfied.

	(i) $\Rightarrow$ (ii): Let $T\colon E \to F$ be a kernel operator.
	Since $(\Omega_2,\Sigma_2)$ is $\sigma$-finite, there exists a sequence $(F_n)\subseteq \Sigma_2$ of disjoint sets of finite measure 
	such that $\cup F_n = \Omega_2$. Then 
	\[g \coloneqq \sum_{n\in\N} 2^{-n} \mu(F_n)^{-1/q} \mathds{1}_{F_n} \]
	is a representative of a weak unit of $F$ (where we set $1/\infty \coloneqq 0$). 
	Hence, by Theorem \ref{thm:kernel-characterization} 
	there exists an at most countable set of quasi-units $U\subseteq Q(g)$ such that for all $f\in E_+$
	\begin{align}
		\label{eqn:Tfrepresentation}
		Tf = \sup \big([0,Tf]\cap \R_+ U \big),
	\end{align}
	where the supremum is taken within the Banach lattice $F$.
	We fix positive and measurable representatives of the elements of $U$.
	Then, after replacing each $h\in U$ by its components $h\mathds{1}_{F_1},h\mathds{1}_{F_2},\dots$ with respect to the exhausting sequence $(F_n)$ 
	and after rescaling we may assume that $U$ consists of indicator functions $U=\{ \mathds{1}_{B_n} : B_n\in \Sigma_2\text{, }n\in\N\}$.
		
	Now we consider the countable collection of measurable sets $(B_n)\subseteq \Sigma_2$ as subbasis of the topology
	\[\cT \coloneqq \bigcap \{ \tau \subseteq \cP(\Omega_2) : \tau \text{ is a topology and }B_n\in \tau \text{ for all }n\in\N\}.\]
	In other words,
	\[ \cT = \big\{ \bigcup \cM : \cM \subseteq \{ B_{k_1} \cap \dots \cap B_{k_n} : k_1,\dots,k_n \in \N\}\big\}\cup \{\Omega_2\}. \]
	This shows that $\cT$ is second countable and that $\cT \subseteq \Sigma_2$;
	hence, the Borel $\sigma$-algebra is coarser than $\Sigma_2$.

	Finally, fix $f \in E_+$. 
	Since $\R_+$ can be replaced with $\Q_+$ under the supremum in formula~\eqref{eqn:Tfrepresentation} and the supremum then coincides 
	with the $\mu_2$-almost everywhere supremum, we conclude that there exists a sequence $(\lambda_n) \subseteq \Q_+$ such that the pointwise supremum
	\[ \bigvee_{n\in\N} \lambda_n  \mathds{1}_{B_n} \colon \Omega_2 \to [0,\infty]\]
	is a representative of $Tf$. 
	This representative is lower semi-continuous with respect to $\cT$ since it is a pointwise supremum of the continuous functions $\mathds{1}_{B_n}$.

	(ii) $\Rightarrow$ (i): 
	Let $(B_n) \subseteq \Omega_2$ be a countable basis of $\cT$. 
	It follows immediately from the definitions that on the second countable space $(\Omega_2,\cT)$ every lower semi-continuous
	function $g \colon\Omega_2 \to [0,\infty]$ satisfies
	\[ g(x) = \bigvee_{n\in\N} \inf \{g(y) : y\in B_n\} \, \mathds{1}_{B_n} (x) \]
	for all $x\in \Omega_2$. 
	Hence, with $U\coloneqq \{\mathds{1}_{B_n} : n\in \N\}$, for every $f\in E_+$ the formula
	\[ Tf = \sup \big([0,Tf]\cap \R_+ U \big) \]
	holds in the Banach lattice $F$.
	So $T$ satisfies the assumptions of assertion (ii) of Theorem~\ref{thm:kernel-characterization} 
	and is thus a kernel operator.
\end{proof}

\section{Characterization of Partial Kernel Operators}
\label{sec:partialkernels}

As in Section~\ref{sec:range-of-kernel-ops}, let $E$ and $F$ be Banach lattices throughout this section. 
In what follows, we derive a characterization of partial kernel operators analogously to Theorem \ref{thm:kernel-characterization}.
We start with one of both implications which holds under slightly less restrictive assumptions than the characterization and which is reminiscent of Theorem~\ref{thm:q-cont-approx-kernel-operators}.

\begin{theorem}
	\label{thm:lower-bounds-partial-kernel}
	Let $E$ have order continuous norm, $F$ be order complete, and $H \subseteq F_+ \setminus\{0\}$ be an at most countable set.
	Let $T\in \cL(E,F)_+$ such that for every $0<x\in E$ there exists $h\in H$ such that $Tx \geq h$.
	Then there exists a kernel operator $K \in \cL(E,F)$ which satisfies $0 \leq K \leq T$ and which is strictly positive 
	in the sense that $Kx > 0$ for all $0 < x \in E$.
\end{theorem}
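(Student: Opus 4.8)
The plan is to produce the required operator as the \emph{largest} kernel operator dominated by $T$ and to establish its strict positivity by a Baire category argument, in the spirit of Theorem~\ref{thm:q-cont-approx-kernel-operators}. Since $F$ is order complete, $\regOps(E,F)$ is an order complete vector lattice, and the kernel operators $(E'\otimes F)^{\bot\bot}$ form a band in it; a band in an order complete lattice is a projection band, so I let $K$ be the component of $T$ in this band. Then $K$ is a kernel operator with $0\le K\le T$, and $K$ dominates every kernel operator lying in $[0,T]$. Hence it suffices to show that for each fixed $0<x_0\in E$ there exists \emph{some} kernel operator $R$ with $0\le R\le T$ and $Rx_0\neq 0$: for then $Kx_0\ge Rx_0>0$, and as $x_0$ is arbitrary this yields the strict positivity of $K$.

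So I would fix $0<x_0\in E$ and work inside the order interval $[0,x_0]$, which is norm closed, hence a complete metric space; thus $[0,x_0]\setminus\{0\}$ is an open, and therefore Baire, subspace. For each $h\in H$ the set $C_h\coloneqq\{y\in[0,x_0]: Ty\ge h\}$ is closed, because $T$ is norm continuous and $h+F_+$ is closed. By the hypothesis on $T$ every $0<y\le x_0$ lies in some $C_h$, so the countably many closed sets $C_h$ cover $[0,x_0]\setminus\{0\}$. By Baire's theorem one of them, say $C_{h_0}$, has nonempty interior, giving $y_0\in[0,x_0]$ with $y_0\neq 0$ and a radius $0<r<\norm{y_0}$ such that $Ty\ge h_0$ for all $y\in[0,x_0]$ with $\norm{y-y_0}\le r$.

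Next I would transfer this local lower bound to a neighbourhood of $x_0$ by the lattice manipulation from Theorem~\ref{thm:q-cont-approx-kernel-operators}: for $y\in[0,x_0]$ set $\tilde y\coloneqq(y_0+y-x_0)^+$. Then $0\le\tilde y\le y$, so $\tilde y\in[0,x_0]$, and $0\le y_0-\tilde y=y_0\wedge(x_0-y)\le x_0-y$, whence $\norm{\tilde y-y_0}\le\norm{x_0-y}$. Thus every $y\in[0,x_0]$ with $\norm{x_0-y}\le r$ satisfies $\norm{\tilde y-y_0}\le r$, so positivity of $T$ and the bound from the previous step give $Ty\ge T\tilde y\ge h_0$. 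To realise a nonzero kernel operator, I would use that, as $E$ has order continuous norm, there is by \cite[Thm~4.15]{aliprantis19852006} a functional $\psi\in E'_+$ strictly positive on $[0,x_0]$, and by rescaling (\cite[Lem~4.16]{aliprantis19852006}) one may assume $\applied{\psi}{y}\ge 1$ for all $y\in[0,x_0]$ with $\norm{y}\ge r$. Then $\psi\otimes h_0$ is a kernel operator, hence so is $R\coloneqq T\wedge(\psi\otimes h_0)$ as it is dominated by it, and $0\le R\le T$. Evaluating via the Riesz--Kantorovich formula,
\[
	Rx_0=\inf\bigl\{Ty_1+\applied{\psi}{y_2}\,h_0 : y_1,y_2\in[0,x_0],\ y_1+y_2=x_0\bigr\},
\]
every summand is at least $h_0$: if $\norm{y_2}\ge r$ then $\applied{\psi}{y_2}\ge 1$, while if $\norm{y_2}<r$ then $\norm{x_0-y_1}<r$ and the transferred bound gives $Ty_1\ge h_0$. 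Hence $Rx_0\ge h_0>0$, which is the required nonvanishing.

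The main obstacle is the step producing the uniform local lower bound: since $T$ is not assumed order (or norm--order) continuous, no such bound is available a priori, and it is precisely the countability of $H$ together with Baire's theorem that forces it on a norm ball. Once this is in hand, the lattice trick and the Riesz--Kantorovich computation are routine adaptations of Lemma~\ref{lem:grobler-van-eldik} and Theorem~\ref{thm:q-cont-approx-kernel-operators}; note that, pleasantly, this route uses neither separability of $E$ nor order continuity of any functional on $F$.
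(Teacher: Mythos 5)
Your proposal is correct and takes essentially the same route as the paper's proof: the largest kernel operator $K$ in $[0,T]$ (obtained from the band projection in $\regOps(E,F)$), a Baire category argument on $[0,x_0]$ with the closed sets $\{y : Ty \ge h\}$, the lattice trick $\tilde y = (y_0+y-x_0)^+$ to transfer the local lower bound to a neighbourhood of $x_0$, and the Riesz--Kantorovich evaluation of $T\wedge(\psi\otimes h_0)$. The only cosmetic differences are that the paper applies Baire directly to $[0,x]$, absorbing $\{0\}$ as an extra closed set in the cover, and bounds $Kx$ directly rather than through an intermediate operator $R$.
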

\begin{proof}
	Let $K$ denote the largest kernel operator between $0$ and $T$ and fix $0<x\in E$;
	this operator exists since the kernel operators form a band, and thus a projection band, within the order complete vector lattice $\regOps(E,F)$.
	We need to show that $Kx > 0$.
	
	For each $h\in H$ the set $A_h \coloneqq T^{-1}\{ v\in F_+ : v\geq h\}$ is closed and
	and we have, by assumption, 
	\[ [0,x] \subseteq \bigcup_{h\in H} A_h \cup \{0\}.\]
	So by Baire's category theorem there exists $h\in H$ such that $A_h\cap [0,x]$ has non-empty interior within $[0,x]$.
	Hence, we find $0<y_0 \in [0,x]$ and $ \delta > 0$ such that 
	$T\tilde y \geq h$ for all $\tilde y \in [0,x]$ with $\norm{y_0 - \tilde y} \leq \delta$.
	
	Now, consider an arbitrary vector $y\in [0,x]$ with $\norm{x-y} \leq \delta$; we claim that $Ty \geq h$. 
	To see this, set $\tilde y \coloneqq (y_0 + y - x)^+ \in [0, y] \subseteq [0,x]$ and observe that
	$\norm{y_0 - \tilde y} \leq \delta$ since $0 \leq y_0 - \tilde y = (x - y) \land y_0 \leq x-y$.
	Then indeed $Ty \geq T \tilde y \geq h$.
	
	Since $E$ has order continuous norm, we find by \cite[Thm~4.15]{aliprantis19852006} a functional $\varphi \in E'_+$ which is strictly positive on $[0,x]$. 
	Since the norm on $E$ is order continuous, the norm topology and the topology induced by the seminorm 
	$\applied{\varphi}{\abs{\argument}}$ coincide on $[0,x]$ \cite[Lem~4.16]{aliprantis19852006}. 
	Hence, by rescaling $\varphi$ we can achieve that 
	for all $y\in [0,x]$ with $\applied{\varphi}{x-y} \leq 1$ one has $\norm{x-y} \leq \delta$ and thus, as just seen, $Ty \geq h$. 
	Therefore,
	\begin{align*}
		Kx \ge (T\wedge (\varphi \otimes h))x = \inf \{ Ty + \applied{\varphi}{x-y} h : y \in [0,x] \} \geq h > 0.
	\end{align*}
	So $Kx > 0$ as claimed.
\end{proof}

As a consequence, one has the following characterization of partial kernel operators in the spirit of Theorem~\ref{thm:kernel-characterization}.

\begin{theorem}
	\label{thm:partial-kernel-characterization}
	Assume that $E = \{0\}$ has order continuous norm and that $E_+$ contains a quasi-interior point.
	Let $F$ be order complete such that $F'$ contains a strictly positive and order continuous functional.
	For every $T\in \cL(E,F)_+$ the following are equivalent:
	\begin{enumerate}[\upshape (i)]
		\item 
		$T$ is a partial kernel operator with a strictly positive kernel, 
		i.e.\ $T \geq  K \geq 0$ for some kernel operator $K$ such that $Kx>0$ for all $0<x\in E$. 
		
		\item 
		There exists an at most countable $H\subseteq F_+\setminus\{0\}$ with the following property: 
		for all $0<x\in E$ there exists $h\in H$ such that $Tx \geq h$.
	\end{enumerate}
\end{theorem}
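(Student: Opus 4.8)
The plan is to handle the two implications separately; the heavier machinery needed for each has already been established, so the present proof is essentially a short assembly of the preceding results.

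For (ii) $\Rightarrow$ (i) I would simply invoke Theorem~\ref{thm:lower-bounds-partial-kernel}. Its hypotheses --- $E$ with order continuous norm, $F$ order complete, and an at most countable $H \subseteq F_+ \setminus \{0\}$ with the stated lower-bound property --- are exactly what (ii) supplies, and its conclusion is a kernel operator $K$ with $0 \le K \le T$ and $Kx > 0$ for all $0 < x \in E$, which is precisely assertion (i). So this direction needs no new argument.

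For (i) $\Rightarrow$ (ii) the decisive observation is that one may pass from $T$ to the dominated kernel operator $K$ itself: since $T \ge K \ge 0$ gives $Tx \ge Kx$ for all $x \in E_+$, it suffices to produce an at most countable $H \subseteq F_+ \setminus \{0\}$ such that each $0 < x$ admits some $h \in H$ with $Kx \ge h$. I would then apply Theorem~\ref{thm:kernel-characterization} to $K$ --- whose hypotheses coincide with those of the present theorem --- to obtain an at most countable $U \subseteq F_+$, which we may assume to avoid $0$ (this leaves $\R_+ U$ unchanged), satisfying $Kx = \sup([0,Kx] \cap \R_+ U)$ for all $x \in E_+$. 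With this in hand I would set $H \coloneqq \{ r u : r \in \Q_+ \setminus \{0\}, \; u \in U \}$, which is at most countable and, because each $r$ and each $u$ is strictly positive, is contained in $F_+ \setminus \{0\}$.

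It then remains to check the lower-bound property, and here the strict positivity of $K$ is what makes things work. Fix $0 < x$, so that $Kx > 0$. The representation formula then forces $[0,Kx] \cap \R_+ U$ to contain a nonzero vector $\lambda u$ --- otherwise this set would equal $\{0\}$ and its supremum would be $0 \ne Kx$ --- whence $\lambda > 0$ and $u > 0$. Choosing a rational $r$ with $0 < r \le \lambda$ produces $h \coloneqq r u \in H$ satisfying $h \le \lambda u \le Kx \le Tx$, as required. I expect the only real subtlety to be this rationalization step: a priori the witness $\lambda u \le Kx$ has a real coefficient depending on $x$, and one must verify that rounding $\lambda$ down to a positive rational both preserves the inequality and lands in a fixed countable set --- which succeeds precisely because $U$ is countable and there is room strictly below the positive number $\lambda$ for a rational.
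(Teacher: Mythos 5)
Your proposal is correct and follows exactly the paper's route: (ii) $\Rightarrow$ (i) is Theorem~\ref{thm:lower-bounds-partial-kernel} verbatim, and (i) $\Rightarrow$ (ii) is Theorem~\ref{thm:kernel-characterization} applied to the dominated kernel operator $K$. The paper leaves the extraction of the countable set $H$ from $U$ implicit; your rationalization step $H = (\Q_+\setminus\{0\})U$ together with the observation that strict positivity of $Kx$ forces a nonzero element of $[0,Kx]\cap\R_+U$ is precisely the intended (and correct) way to fill that in.
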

\begin{proof}
	(i) $\Rightarrow$ (ii): 
	This follows from Theorem~\ref{thm:kernel-characterization} applied to the kernel operator $K$.
	As $E \not= \{0\}$ and $K$ is strictly positive, 
	for each $0 < x \in E$ one always finds a non-zero element $h \in H$ that satisfies $Kx \geq h$.
	
	(ii) $\Rightarrow$ (i): 
	This was shown in Theorem~\ref{thm:lower-bounds-partial-kernel}.
\end{proof}

It is worth noting that, under the conditions of Theorem \ref{thm:kernel-characterization} (or \ref{thm:partial-kernel-characterization}) on the spaces $E$ and $F$, 
the following holds: 
if $T,K \in \cL(E,F)_+$ such that $K$ is a kernel operator (or partial kernel operator with strictly positive kernel) 
and $TE_+ \subseteq KE_+$, then $T$ is a kernel operator (or partial kernel operator with strictly positive kernel).

It is desirable to also have a version of Theorem~\ref{thm:partial-kernel-characterization} available that works 
for operators which are positive but not strictly positive.  A simple result of this type is the following corollary.

\begin{corollary}
	\label{cor:partial-kernel-characterization-ideal}
	Assume that $E$ has order continuous norm and that $E_+$ contains a quasi-interior point.
	Let $F$ be order complete such that $F'$ contains a strictly positive and order continuous functional and
	let $J\subseteq E$ be a non-zero ideal. Then for every $T\in \cL(E,F)_+$ the following are equivalent:
	\begin{enumerate}[\upshape (i)]
		\item 
		$T$ is a partial kernel operator whose kernel is strictly positive on $J$,
		i.e.\ $T \geq  K$ for some kernel operator $K$ that satisfies $Kx>0$ for all $0<x\in J$.
		
		\item 
		There exists an at most countable set $H\subseteq F_+\setminus\{0\}$ with the following property: 
		for all $0<x\in J$ there exists $h\in H$ such that $Tx \geq h$.
	\end{enumerate}
\end{corollary}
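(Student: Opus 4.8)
The plan is to prove the two implications by localizing to the ideal $J$ the arguments that already underlie Theorem~\ref{thm:partial-kernel-characterization}, so that no genuinely new machinery is required.

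For the direction (i) $\Rightarrow$ (ii) I would start from a kernel operator $K$ with $0 \le K \le T$ and $Kx > 0$ for every $0 < x \in J$, and apply Theorem~\ref{thm:kernel-characterization} to $K$. This yields an at most countable set $U \subseteq F_+$ with $Kx = \sup\big([0,Kx] \cap \R_+ U\big)$ for all $x \in E_+$. I then set $H \coloneqq \{ q u : q \in \Q,\ q > 0,\ u \in U \} \setminus \{0\}$, which is again at most countable. For a fixed $0 < x \in J$ the strict positivity $Kx > 0$ forces $[0,Kx] \cap \R_+ U$ to contain a nonzero element $\lambda u$ (otherwise its supremum would vanish); choosing a rational $0 < q \le \lambda$ gives $0 < qu \le \lambda u \le Kx \le Tx$, so $h \coloneqq qu \in H$ is the desired lower bound for $Tx$.

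For the direction (ii) $\Rightarrow$ (i) the crucial observation is that the proof of Theorem~\ref{thm:lower-bounds-partial-kernel} is purely local in the fixed vector $x$. Let $K$ be the largest kernel operator in $[0,T]$; this exists because the kernel operators form a projection band in the order complete lattice $\regOps(E,F)$, and $K$ is a kernel operator on all of $E$ by construction. Fixing $0 < x \in J$ and using that $J$ is an ideal, so that $[0,x] \subseteq J$, hypothesis (ii) provides for every $0 < y \in [0,x]$ some $h \in H$ with $Ty \ge h$. This is precisely the inclusion $[0,x] \subseteq \bigcup_{h\in H} A_h \cup \{0\}$ driving the Baire category step of Theorem~\ref{thm:lower-bounds-partial-kernel}, and the remainder of that proof -- the choice of $y_0$ and $\delta$, the trick $\tilde y = (y_0 + y - x)^+$, and the domination $Kx \ge (T \wedge (\varphi \otimes h))x \ge h$ with a $\varphi \in E'_+$ strictly positive on $[0,x]$ -- then applies verbatim to yield $Kx > 0$. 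Since $x \in J_+\setminus\{0\}$ was arbitrary, $K$ has a kernel strictly positive on $J$.

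I expect the main subtlety to lie in keeping the \emph{conclusion} global, i.e.\ producing a kernel operator on all of $E$ from hypotheses that constrain $T$ only on $J$. The tempting alternative is to pass to the closed ideal $\overline{J}$ (a projection band, since $E$ has order continuous norm), apply Theorem~\ref{thm:partial-kernel-characterization} to the restriction $T|_{\overline{J}}$, and extend along the band projection $P\colon E \to \overline{J}$; but this route forces one to check that $K_0 P$ is again a kernel operator on $E$, an extra composition argument. Working instead with the global largest kernel operator $K \le T$ and exploiting $[0,x]\subseteq J$ sidesteps this entirely, which is why I would organize the proof as above. (Incidentally, one sees along the way that (ii) even holds for all $0 < x \in \overline{J}$, since such an $x$ dominates a nonzero element $\abs{x_n}\wedge x \in J$ for any $x_n \in J$ approximating $x$; this makes the reduction route viable as well, but only at the cost of the aforementioned composition step.)
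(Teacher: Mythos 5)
Your proposal is correct, but it takes a different route from the paper, whose entire proof is the single line ``apply Theorem~\ref{thm:partial-kernel-characterization} to $T|_B$, where $B$ is the band generated by $J$.'' That reduction implicitly requires two checks that you correctly identify as the delicate points: first, that hypothesis~(ii), stated only for $x \in J$, propagates to all $0 < x \in B = J^{\bot\bot}$ (it does, since any such $x$ dominates some $0 < y \in J$, namely $y = |z| \wedge x$ for a suitable $z \in J$ not disjoint from $x$); and second, that the kernel operator $K_0 \le T|_B$ produced on $B$ extends to a kernel operator on $E$, e.g.\ as $K_0 P$ with $P$ the band projection onto $B$ -- a composition step the paper does not spell out. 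Your argument sidesteps both issues: for (ii)~$\Rightarrow$~(i) you work directly with the globally defined largest kernel operator $K \in [0,T]$ and observe that the Baire-category argument of Theorem~\ref{thm:lower-bounds-partial-kernel} is local in $x$, needing only the covering of $[0,x] \subseteq J$ by the sets $A_h$, which hypothesis~(ii) supplies verbatim for $x \in J$; for (i)~$\Rightarrow$~(ii) you apply Theorem~\ref{thm:kernel-characterization} to $K$ on all of $E$ and extract a nonzero rational multiple $qu \le Kx \le Tx$ from the representation of $Kx > 0$, which is essentially the same mechanism as the paper's proof of the corresponding implication in Theorem~\ref{thm:partial-kernel-characterization}, just without the band restriction. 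What your approach buys is a self-contained proof with no extension or restriction of kernel operators; what the paper's approach buys is brevity, at the cost of leaving the band-reduction details to the reader.
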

\begin{proof}
	This follows from Theorem \ref{thm:partial-kernel-characterization} applied to $T|_B$, 
	where $B \subseteq E$ denotes the band generated by $J$.
\end{proof}

We close the article with a reformulation of Theorem~\ref{thm:partial-kernel-characterization} on $L^p$-spaces.

\begin{corollary}
	\label{cor:characterization-partial-integral-operators}
	Let $(\Omega_1,\Sigma_1,\mu_1)$ with $\mu_1 \not= 0$ and $(\Omega_2,\Sigma_2,\mu_2)$ be $\sigma$-finite measure spaces.
	For $p\in [1,\infty)$ and $q\in [1,\infty]$ 
	we set $E\coloneqq L^p(\Omega_1, \Sigma_1,\mu_1)$ and $F\coloneqq L^q(\Omega_2, \Sigma_2 , \mu_2)$.
	For every operator $T\in \cL(E,F)_+$ the following are equivalent:
	\begin{enumerate}[\upshape (i)]
		\item 
		$T$ is a partial kernel operator with a strictly positive integral kernel, 
		i.e.\ $T \geq K$ for some kernel operator $K$ such that $Kf>0$ for all $0<f\in E$. 
		
		\item 
		There exists a second countable topology $\cT$ on $\Omega_2$ with Borel $\sigma$-algebra $\cB(\cT) \subseteq \Sigma_2$
		such that for every $0< f \in E$ there exists $0<g \in F$ which satisfies $Tf \geq g$ 
		and which has a lower semi-continuous representative with values in $[0,\infty]$.
	\end{enumerate}
\end{corollary}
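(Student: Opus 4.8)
The plan is to reduce the corollary to two results already at our disposal: the characterization of partial kernel operators in Theorem~\ref{thm:partial-kernel-characterization} and the translation between abstract approximation formulas and lower semi-continuous representatives performed in Theorem~\ref{thm:characterization-integral-operators}. As verified in the proof of the latter, the spaces $E = L^p$ (with $p \in [1,\infty)$, hence of order continuous norm) and $F = L^q$ meet the standing hypotheses of both abstract theorems. Moreover, since $E$ has order continuous norm, every regular operator out of $E$ is order continuous, so by Proposition~\ref{prop:concrete-abstract-kernels} a positive operator $E \to F$ is a kernel operator if and only if it is an integral operator (with almost everywhere positive kernel); this identifies assertion~(i) of the corollary with assertion~(i) of Theorem~\ref{thm:partial-kernel-characterization}.

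For the implication ``(i) $\Rightarrow$ (ii)'' I would simply feed the dominated integral operator $K$ into Theorem~\ref{thm:characterization-integral-operators}. That theorem, applied to $K$, produces a second countable topology $\cT$ on $\Omega_2$ with $\cB(\cT) \subseteq \Sigma_2$ such that $Kf$ admits a lower semi-continuous $[0,\infty]$-valued representative for every $f \in E_+$. Fixing $0 < f \in E$ and setting $g \coloneqq Kf$, strict positivity of $K$ gives $g > 0$, the domination $T \ge K$ gives $Tf \ge g$, and $g = Kf$ has a lower semi-continuous representative by construction; this is precisely~(ii).

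The substantial direction is ``(ii) $\Rightarrow$ (i)'', where the goal is to manufacture the at most countable set $H$ demanded by Theorem~\ref{thm:partial-kernel-characterization}(ii). Let $(B_n)_{n \in \N}$ be a countable basis of $\cT$ and put $H \coloneqq \{\lambda \mathds{1}_{B_n} : \lambda \in \Q_+ \setminus \{0\}, \; n \in \N, \; 0 < \mu_2(B_n) < \infty\}$, an at most countable subset of $F_+ \setminus \{0\}$. To check its defining property, fix $0 < f \in E$ and use~(ii) to pick $0 < g \le Tf$ with a lower semi-continuous representative $\tilde g \colon \Omega_2 \to [0,\infty]$. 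As in the proof of Theorem~\ref{thm:characterization-integral-operators}, lower semi-continuity on the second countable space $(\Omega_2, \cT)$ gives $\tilde g = \bigvee_{n \in \N} c_n \mathds{1}_{B_n}$ pointwise with $c_n \coloneqq \inf\{\tilde g(y) : y \in B_n\}$, so that $\{\tilde g > 0\} = \bigcup\{B_n : c_n > 0\}$. Since $g > 0$, this set has positive measure, whence some $n$ satisfies $c_n > 0$ and $\mu_2(B_n) > 0$; and $c_n \mathds{1}_{B_n} \le g \in L^q$ forces $\mu_2(B_n) < \infty$. Choosing a rational $0 < \lambda \le c_n$ yields $h \coloneqq \lambda \mathds{1}_{B_n} \in H$ with $Tf \ge g \ge h$. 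Theorem~\ref{thm:partial-kernel-characterization} then hands back a kernel operator $K$ with $T \ge K$ and $Kx > 0$ for all $0 < x \in E$, and Proposition~\ref{prop:concrete-abstract-kernels} recasts $K$ as an integral operator, establishing~(i).

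The main obstacle is the measure-theoretic extraction in the second direction: the task is to convert the merely qualitative minorant $\tilde g$ into a concrete rational multiple of the indicator of a \emph{finite-measure} basic open set, in a way that keeps the resulting family $H$ countable and independent of $f$. The two points requiring care are that a positive-measure portion of $\{\tilde g > 0\}$ is already captured by a single basic open set on which $\tilde g$ is bounded away from $0$, and that integrability $g \in L^q$ (a genuine constraint only when $q < \infty$) forces this basic set to have finite measure, so that $\mathds{1}_{B_n} \in F$. Everything else is a direct invocation of the theorems already proven.
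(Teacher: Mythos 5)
Your proposal follows the same route as the paper: ``(i) $\Rightarrow$ (ii)'' by applying Theorem~\ref{thm:characterization-integral-operators} to the dominated integral operator $K$ and taking $g = Kf$, and ``(ii) $\Rightarrow$ (i)'' by building a countable set $H$ of rational multiples of indicators of basic open sets and invoking Theorem~\ref{thm:partial-kernel-characterization}. In fact you supply the details that the paper compresses into a single ``it follows that'': the representation $\tilde g = \bigvee_n c_n \mathds{1}_{B_n}$ of the lower semi-continuous minorant and the extraction of a single $B_n$ with $c_n > 0$ and $\mu_2(B_n) > 0$ are exactly the missing justification, and your observation that one must also ensure $\mathds{1}_{B_n} \in F$ is a point the paper glosses over.

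There is one small slip: you build the finiteness condition $\mu_2(B_n) < \infty$ into the \emph{definition} of $H$, and justify it by ``$c_n \mathds{1}_{B_n} \le g \in L^q$ forces $\mu_2(B_n) < \infty$''. That implication holds only for $q < \infty$; for $q = \infty$ the basic set you extract may well have infinite measure, in which case $\lambda \mathds{1}_{B_n}$ is a perfectly good nonzero element of $F = L^\infty$ but is excluded from your $H$, and the covering property of $H$ fails. The repair is immediate (for $q = \infty$ drop the finiteness restriction, or replace $\cB$ by the countable family $\{B_n \cap F_k\}$ for a $\sigma$-finite exhaustion $(F_k)$ of $\Omega_2$), so this is a definitional oversight rather than a flaw in the method, but as written the argument does not cover the case $q = \infty$ that the corollary explicitly allows.
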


\begin{proof}
	(i) $\Rightarrow$ (ii): This follows from Theorem~\ref{thm:characterization-integral-operators}.

	(ii) $\Rightarrow$ (i): 
	Let $\cB$ denote a countable basis of the topology $\cT$.
	For every $0<f \in E$ it follows that
	$Tf \geq \lambda\mathds{1}_{B}$ for some $\lambda \in \Q_+\setminus\{0\}$ and some $B\in \cB$. 
	Therefore, Theorem~\ref{thm:partial-kernel-characterization} applied to $H\coloneqq \{ \lambda \mathds{1}_B : \lambda\in \Q_+\setminus\{0\}\text{, }B\in \cB\}$
	implies that $T$ is a partial kernel operator with strictly positive integral kernel.
\end{proof}

\subsection*{Acknowledgements}

We are grateful to the referee for a very useful 
hint on how to drop a separability assumption on $E$ throughout the article 
and for pointing out the characterization of kernel operators by means of equimeasurable sets 
in the articles \cite{grobler1985, schachermayer1981, schep1981}.

\bibliographystyle{amsplain}
\bibliography{kernel-range.bib}

\end{document}